\documentclass[reqno,10pt]{amsart}
%\usepackage{fullpage}
% use color and labels!!!
\usepackage{color}
%\usepackage[notref,notcite]{showkeys}
%special package for section and tableofcontents
%\usepackage{sectsty}
%\usepackage[titles]{tocloft}
\usepackage{amsmath}
\usepackage{amsfonts}
\usepackage{amssymb}
\usepackage{amsthm}
\usepackage{newlfont}
\usepackage{graphicx}
% ----------------------------------------------------------------
%\vfuzz2pt % Don't report over-full v-boxes if over-edge is small
%\hfuzz2pt % Don't report over-full h-boxes if over-edge is small
% THEOREMS -------------------------------------------------------
\newtheorem{thm}{Theorem}[section]
\newtheorem{cor}[thm]{Corollary}
\newtheorem{lem}[thm]{Lemma}

\theoremstyle{definition}

\theoremstyle{remark}
\newtheorem{rem}[thm]{Remark}
\numberwithin{equation}{section}
% MATH -----------------------------------------------------------

% ----------------------------------------------------------------
%---my own commands-----------------------------------------------

%----------------------------------------------------------------
%---my commands for debugging purposes----------------
\newcommand{\ed}{\end {document}}
%-------------------------------------------------
%%% new commands for this special document

%opening

\title[On Bernstein Inequality]
{ on a frequency localized Bernstein inequality and some generalized Poincare-type inequalities}

\author[D. Li]{Dong Li}
\address[D. Li]{Department of Mathematics, University of British Columbia, Vancouver BC Canada V6T 1Z2\\
and Institute for Advanced Study, 1st Einstein Drive, Princeton, NJ 08544, USA}%
\email{mpdongli@gmail.com}

\begin{document}
\begin{abstract}
%Let $0<\alpha<2$ and $1<q<\infty$.
We consider a frequency localized Bernstein inequality for the
fractional Laplacian operator which has wide applications in fluid
dynamics such as dissipative surface quasi-geostrophic equations. We
use a heat flow reformulation and prove the inequality for the full
range of parameters and in all dimensions. A crucial observation
is that after frequency projection
 the zero frequency part of the L\'{e}vy semigroup does not participate in
 the inequality and therefore can be freely
adjusted. Our proof is based on this idea and a
careful perturbation of the L\'{e}vy semigroup near the zero frequency
which preserves the positivity  and improves the time decay. Several
alternative proofs (with weaker results) are also included. As an application we
also give new proofs of some generalized Poincare type inequalities.

\end{abstract}

\maketitle

\section{Introduction}
In this note we consider a frequency localized Bernstein-type inequality which
has useful applications in fluid dynamics. Let $\alpha>0$ and consider the fractional Laplacian operator
$|\nabla|^\alpha$ defined via Fourier transform by the relation
\begin{align*}
\widehat{|\nabla|^\alpha f }(\xi)=(2\pi |\xi|)^\alpha \hat f(\xi), \quad \xi \in \mathbb R^d.
\end{align*}
Here $\hat f(\xi)$ is the usual Fourier transform of a scalar-valued function $f$ on $\mathbb R^d$.
To fix the notations, we adopt the following conventional definition of Fourier transform pair:
\begin{align}
\text{Fourier transform:}\qquad &(\mathcal F f)(\xi)=\hat f(\xi) = \int_{\mathbb R^d} f(x) e^{-2\pi i x \cdot \xi} dx, \notag \\
\text{Inverse Fourier transform:}\quad & f(x) =  \int_{\mathbb R^d}
\hat f(\xi) e^{2\pi i x \cdot \xi}d \xi. \notag
\end{align}
Occasionally we also use the notation $\mathcal F^{-1}$ to denote inverse Fourier transform.
Let $0 < \alpha \le 2$ and $1<q<\infty$, the Bernstein-type inequality we are interested in
takes the following form: for any $A_2>A_1>0$ and any $f \in L^q(\mathbb R^d)$ with
\begin{align} \label{sup_cond1}
\text{supp}(\hat f) \subset \{ \xi:\;
A_1 \le |\xi| \le A_2\},
\end{align}
 there is a constant $C$ depending only on $(d,p,\alpha,A_1,A_2)$ such that
\begin{align}
C \| f\|_{q}^q\ge \int_{\mathbb R^d} (|\nabla|^{\alpha} f) |f|^{q-2} f dx \ge \frac 1 {C} \| f\|_{q}^q. \label{Bern1}
\end{align}
Here $\|f \|_q$ is the usual Lebesgue norm of $f$ on $\mathbb R^d$. An equivalent and more commonly used formulation of
\eqref{Bern1} is stated in Corollary \eqref{cor1} in which the frequency support condition \eqref{sup_cond1}
is replaced by the Littlewood-Paley operators. In \eqref{Bern1}, the upper bound is trivial: it is a consequence of the
H\"older's inequality and the usual Bernstein inequality. As for the lower bound, the case $q=2$ is a simple consequence
of the Plancherel theorem since by \eqref{sup_cond1},
\begin{align*}
\int_{\mathbb R^d} |\nabla|^{\alpha} f f dx & = \int_{A_1 \le |\xi| \le A_2} (2\pi|\xi|)^{\alpha} |\hat f(\xi)|^2
d\xi \notag \\
& \ge  {(2\pi A_1)^{\alpha}} \int_{\mathbb R^d} |\hat f(\xi)|^2 dx = {(2\pi A_1)^{\alpha}} \| f \|_2^2.
\end{align*}
It is the case $1<q<\infty$, $q\ne 2$ which requires more elaborate analysis.

For the full Laplacian case $\alpha=2$, the inequality \eqref{Bern1} can be reduced
to the  form (still under the condition \eqref{sup_cond1})
\begin{align}
\int_{\mathbb R^d} | \nabla f |^2 |f|^{q-2} dx \ge C \| f \|_q^q \label{Bern2}
\end{align}
after an integration by parts argument. The inequality \eqref{Bern2} was first proved by Danchin
\cite{Dan1} when $q$ is an even integer and under a certain $q$-dependent small angle condition on the frequency
support. Planchon
\cite{Pl1} proved the case $\alpha=2$, $2<q<\infty$ by using an integration by parts argument.
In \cite{Dan2}, Danchin settled the remaining case $\alpha=2$, $1<q<2$ in the appendix of
that paper (see Lemma A.5 therein). The fractional Laplacian formulation of \eqref{Bern1} for $0<\alpha<2$ first
appeared in Wu \cite{Wu1} and it is of fundamental importance in the wellposedness theory for the dissipative quasi-geostrophic
equations. In \cite{CMZ07}, Chen, Miao and Zhang proved the inequality \eqref{Bern1} for $0<\alpha<2$,
$2<q<\infty$ by using an interpolation definition of Besov spaces. Recently Hmidi \cite{Hmidi11}
even generalized \eqref{Bern1} to some logarithm-damped fractional Laplacian operators of the form
\begin{align*}
\frac{|\nabla|^{\gamma}} {\log^{\beta}(\lambda +|\nabla|)}, \qquad 0\le \beta\le \frac 12,\; 0\le \gamma \le 1,
\lambda \ge e^{\frac{3+2\alpha}{\beta}}
\end{align*}
in dimensions $d=1,2,3$. The maximum principle for these nonlocal operators is obtained in \cite{Hmidi11} and \cite{DL12}.

The purpose of this note is to give a completely new proof of \eqref{Bern1} which works for $0<\alpha \le 2$ and for all
$1<q<\infty$. We begin by reformulating \eqref{Bern1} in terms of a (fractional) heat flow estimate.
The following result is the key step. See Remark \ref{rem_weak} for a slightly weaker result.

\begin{thm}[Improved heat flow estimate] \label{thm0}
Let the dimension $d\ge 1$. Let $0 < \alpha < 2$ and $1\le q  \le \infty$. There exists a constant $c>0$ depending {only
on the dimension $d$ and $\alpha$} such that
for any dyadic $N>0$ and any function $f\in L_x^q (\mathbb R^d)$, we have
\begin{align}
\|e^{-t|\nabla|^{\alpha}} P_N f \|_{q} \le e^{-c t N^{\alpha} } \| P_N f \|_q, \qquad \forall\, t\ge 0. \label{e00}
\end{align}
Here $P_N$ is the Littlewood-Paley operator defined in \eqref{lp_def}. For $\alpha=2$, there is an absolute constant $\tilde c>0$ such that
for any $1<q<\infty$, $f \in L_x^q(\mathbb R^d)$, we have
\begin{align}
\|e^{t\Delta} P_N f \|_{q} \le e^{-\tilde c \frac{q-1} {q^2} t N^2 } \| P_N f \|_q, \qquad \forall\, t\ge 0. \label{e00a}
\end{align}
\end{thm}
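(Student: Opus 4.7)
The crucial observation is that $\widehat{P_N f}$ is supported in an annulus $\{|\xi|\sim N\}$ bounded away from the origin, so replacing the symbol $(2\pi|\xi|)^\alpha$ by any other symbol $\psi_N$ that agrees with it on $\mathrm{supp}\,\widehat{P_N f}$ leaves $e^{-t|\nabla|^\alpha}P_N f$ unchanged. I will construct $\psi_N$ so that, in addition, (i) $\psi_N(0)=c_0 N^\alpha>0$ for some $c_0=c_0(d,\alpha)>0$, and (ii) $\psi_N-c_0 N^\alpha$ is the characteristic exponent of a symmetric L\'evy process on $\mathbb R^d$. Granting (i) and (ii), $\mathcal F^{-1}(e^{-t(\psi_N-c_0 N^\alpha)})$ is a probability measure, hence $\mathcal F^{-1}(e^{-t\psi_N})$ is a nonnegative finite measure of total mass $e^{-c_0 N^\alpha t}$, and Young's convolution inequality gives
\begin{align*}
\|e^{-t|\nabla|^\alpha}P_N f\|_q=\bigl\|\mathcal F^{-1}(e^{-t\psi_N})\ast P_N f\bigr\|_q\le e^{-c_0 N^\alpha t}\|P_N f\|_q
\end{align*}
for all $1\le q\le\infty$, with no extra multiplicative constant.

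\textbf{Construction and L\'evy-Khintchine check.} By the scaling $\psi_N(\xi):=N^\alpha\psi_1(\xi/N)$ it suffices to build $\psi_1$. Pick a nonnegative, even, radial bump $\eta\in C_c^\infty(\mathbb R^d)$ supported in $\{|\xi|\le\delta\}$ with $2\delta$ strictly smaller than the inner radius of $\mathrm{supp}\,\phi$, and set $\omega:=(\eta\ast\eta)/\|\eta\|_2^2$. Then $\omega\in C_c^\infty(\{|\xi|\le 2\delta\})$, $\omega(0)=1$, $\omega\equiv 0$ on $\mathrm{supp}\,\phi$, and $m:=\mathcal F^{-1}\omega=|\mathcal F^{-1}\eta|^2/\|\eta\|_2^2$ is a nonnegative, even, Schwartz probability density. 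Define
\begin{align*}
\psi_1(\xi):=(2\pi|\xi|)^\alpha+c_0\,\omega(\xi),\qquad \tilde\psi(\xi):=\psi_1(\xi)-c_0=(2\pi|\xi|)^\alpha-c_0\bigl(1-\omega(\xi)\bigr);
\end{align*}
then $\psi_1(0)=c_0$ and $\psi_1=(2\pi|\xi|)^\alpha$ on $\mathrm{supp}\,\phi$. Combining the L\'evy-Khintchine representation $(2\pi|\xi|)^\alpha=c_{d,\alpha}\int(1-\cos(2\pi\xi\cdot y))|y|^{-d-\alpha}\,dy$ with $1-\omega(\xi)=\int(1-\cos(2\pi\xi\cdot y))m(y)\,dy$ yields
\begin{align*}
\tilde\psi(\xi)=\int_{\mathbb R^d\setminus\{0\}}(1-\cos(2\pi\xi\cdot y))\bigl(c_{d,\alpha}|y|^{-d-\alpha}-c_0\,m(y)\bigr)\,dy,
\end{align*}
which is a bona fide symmetric L\'evy symbol as soon as the bracket is pointwise nonnegative on $\mathbb R^d\setminus\{0\}$. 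Since $m$ is Schwartz, $|y|^{d+\alpha}m(y)$ is uniformly bounded, and since $|y|^{-d-\alpha}$ blows up at the origin, choosing $c_0=c_0(d,\alpha)>0$ sufficiently small enforces the nonnegativity and closes the argument for $\alpha\in(0,2)$.

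\textbf{The case $\alpha=2$ and main obstacle.} This L\'evy perturbation breaks down at $\alpha=2$, since $(2\pi|\xi|)^2$ corresponds to pure Brownian motion with empty L\'evy measure: there is no positive density from which $c_0 m(y)\,dy$ can be subtracted, which is exactly why the improved rate must degenerate as $q\to 1$ or $q\to\infty$. For this case I would instead set $u(t,\cdot)=e^{t\Delta}P_N f$, use the Stroock-Varopoulos identity $\tfrac{d}{dt}\|u\|_q^q=-\tfrac{4(q-1)}{q}\int|\nabla(|u|^{q/2-1}u)|^2\,dx$, and try to bound the dissipation below by $\tilde c(q-1)N^2/q\cdot\|u\|_q^q$, whereupon Gronwall would yield the claim. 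The hard step---the main obstacle of this case---is extracting the factor $N^2$: the function $|u|^{q/2-1}u$ is not supported on a single Littlewood-Paley shell, so the $L^2$ Bernstein inequality does not apply directly, and one must exploit the fact that $u=P_N u$ to control the low-frequency tail of $|u|^{q/2-1}u$ by $u$ itself.
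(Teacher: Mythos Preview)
Your argument is correct and shares the paper's core idea: since $\widehat{P_Nf}$ vanishes near the origin, one may add to $(2\pi|\xi|)^\alpha$ a bump $c_0\omega(\xi)$ supported inside the spectral gap, then show the perturbed kernel $\mathcal F^{-1}(e^{-t\psi_1})$ is nonnegative with total mass $e^{-c_0t}$, and apply Young. The difference lies in how positivity is established. The paper writes $k_\epsilon=p+F_\epsilon$ and shows that the perturbation $F_\epsilon$ is pointwise dominated by $p$ for small $\epsilon$ and $0<t\le 1$, relying on the two-sided pointwise bound $p(t,x)\asymp t/(t^{1/\alpha}+|x|)^{d+\alpha}$; the semigroup property then extends to all $t$. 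You instead engineer $\omega=\eta\ast\eta/\|\eta\|_2^2$ so that $m=\mathcal F^{-1}\omega\ge 0$, and then check directly that $\tilde\psi=\psi_1-c_0$ has the L\'evy--Khintchine form with nonnegative jump measure $c_{d,\alpha}|y|^{-d-\alpha}-c_0 m(y)$. This is a genuinely different and somewhat cleaner route: it yields positivity of the kernel for \emph{all} $t>0$ at once, without the intermediate small-time restriction or the explicit heat-kernel bounds. The price is that you must invoke the L\'evy--Khintchine theorem; the paper's approach is more self-contained but uses the pointwise decay estimate \eqref{to1a} as a black box. Both arguments make transparent why $\alpha=2$ is excluded: in your language, the stable L\'evy measure $c_{d,\alpha}|y|^{-d-\alpha}\,dy$ degenerates; in the paper's, the lower bound in \eqref{to1a} fails.

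\textbf{Case $\alpha=2$.} Here there is a genuine gap. Your Stroock--Varopoulos plus Gronwall plan is not completed: as you yourself note, bounding $\int|\nabla(|u|^{q/2-1}u)|^2$ below by $cN^2\|u\|_q^q$ is nontrivial because $|u|^{q/2-1}u$ is not frequency-localized. You do not resolve this obstacle. In fact the paper obtains \eqref{e00a} by a much simpler and entirely self-contained argument (Remark~\ref{rem_weak}): interpolate the trivial $L^2$ bound $\|e^{t\Delta}P_Nf\|_2\le e^{-c_1tN^2}\|P_Nf\|_2$ (Plancherel) against the contractivity bounds $\|e^{t\Delta}P_Nf\|_r\le\|P_Nf\|_r$ at $r=1,\infty$. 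Riesz--Thorin then immediately gives the exponent $c_1(q-1)/q^2$. You should replace your incomplete $\alpha=2$ discussion with this interpolation.
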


\begin{rem}
The usual Young's inequality together with the fact $\|\mathcal F^{-1} (e^{-t (2\pi|\xi|)^{\alpha}}) \|_{L^1_x}=1$
easily yields that
\begin{align}
\|e^{-t|\nabla|^{\alpha}} P_N f \|_{q} \le \| P_N f\|_q, \qquad\forall\, t \ge 0. \notag
\end{align}
The inequalities \eqref{e00}--\eqref{e00a} give a strengthening (thus the name "improved") of the above estimate.
It is of course fairly easy to prove the estimate
\begin{align}
\|e^{-t|\nabla|^{\alpha}} P_N f \|_{q} \le C_1 e^{-c t N^{\alpha} } \| P_N f \|_q, \qquad \forall\, t\ge 0
\end{align}
with a non-sharp constant $C_1$.  The main point of \eqref{e00} is that $C_1$ can take the sharp value $1$. This is
very important for deriving the later inequality \eqref{Bern1}.
We shall only need the estimate near $t=0$ to prove
the inequality \eqref{Bern1}. Also it is worthwhile pointing it out that in \eqref{e00} $P_N$ can
be replaced by $P_{\ge N}$ or $P_{>N}$ since the main property
needed in the proof is a certain spectral gap condition.
\end{rem}

\begin{rem}
We stress that the two bounds \eqref{e00} and \eqref{e00a} are essentially optimal.
In particular the constant $c$ in \eqref{e00} cannot be taken to be uniform for all $0<\alpha\le 2$ and will actually blow up at $\alpha=2$.
This is deeply connected with the fact that the decay of $e^{-t|\nabla|^{\alpha}}$ is power-like only for $0<\alpha<2$.
For the full Laplacian, even with frequency localization, one \emph{should not} expect the inequality
\begin{align*}
 \| e^{t \Delta} P_N f\|_{\infty} \le e^{-ctN^2} \| P_N f\|_{\infty}.
\end{align*}
To see this point it suffices to consider the periodic case, see Remark \ref{rem_counter_p} below.
\end{rem}

\begin{rem} \label{rem_weak}
 If we do not care so much about the constant dependence on $q$, we can give a much shorter (and almost trivial) proof.
 We sketch the argument as follows. By interpolating
 the obvious inequalities (here $0<\alpha\le 2$, $N>0$, and $c_1$ is an absolute constant):
 \begin{align}
  \|  e^{-t|\nabla|^{\alpha} } P_N f \|_2 &\le e^{-c_1t N^{\alpha}} \| P_N f \|_2,  \notag \\
 \|  e^{-t|\nabla|^{\alpha} } P_N f \|_{q} &\le \| P_N f \|_{q}, \quad q=1\, \text{or}\, \infty, \label{einfty_weak}
 \end{align}
we obtain
\begin{align} \label{erem_00a}
 \| e^{-t |\nabla|^{\alpha} } P_N f \|_q \le e^{-c_1 tN^{\alpha} \cdot \frac{q-1} {q^2} } \|P_N f\|_q, \quad \forall\, 1<q<\infty.
\end{align}
Comparing \eqref{erem_00a} with \eqref{e00}, the main improvement there is at the endpoints $q=1$ and $q=\infty$ for $0<\alpha<2$.
\end{rem}

\begin{rem}
 One may wonder whether it is possible to absorb the frequency localization into the kernel and prove directly the bound
 (say for $N=1$)
 \begin{align} \label{eNov29_1}
  \left \| \mathcal F^{-1} ( P_1 e^{-t |\nabla|^{\alpha} } ) \right\|_{L_x^1} \le e^{-ct}, \quad t>0.
 \end{align}
We show that \eqref{eNov29_1} is impossible even for $t$ sufficiently small. Let $\phi_1(\xi) = \varphi(\xi) -\varphi(2\xi)$ (see \eqref{lp_def} for
the definition of $\varphi$)
and consider the function
\begin{align*}
 g(t,x) =  \int_{\mathbb R^d} e^{-t(2\pi|\xi|)^{\alpha}} \phi_1(\xi) e^{2\pi i \xi \cdot x} d\xi.
\end{align*}
For $ |\xi| =1$, we have by definition
\begin{align*}
 1= |\phi_1(\xi)| & = \left| \int_{\mathbb R^d} g(0,x) e^{-2\pi ix \cdot \xi} d\xi \right| \notag \\
 & \le \int_{\mathbb R^d} |g(0,x)| |\cos (2\pi x \cdot \xi) | dx \notag \\
 & \le \int_{\mathbb R^d} |g(0,x) | dx.
\end{align*}
By examining the conditions for equality, it is not difficult to disprove the possibility $\| g(0,\cdot) \|_1=1$. Therefore
we have $\|g(0,\cdot)\|_1>1$. Since $\| g(t,\cdot)\|_1$ is continuous in $t$, we get for $\| g(t,\cdot)\|_1 >1$ for $t$ sufficiently small.
This disproves \eqref{eNov29_1}.
\end{rem}

The Bernstein inequality \eqref{Bern1} can be regarded as an infinitesimal version of the decay
estimate \eqref{e00}--\eqref{e00a}. We state it as the following corollary.

\begin{cor} \label{cor1}
Let the dimension $d\ge 1$. Let $0<\alpha<2$ and $1< q <\infty$. Then for any dyadic $N>0$,
 any $f\in L_x^q(\mathbb R^d)$,
 \begin{align}
 \int_{\mathbb R^d} (P_N |\nabla|^\alpha f )|P_N f |^{q-2} P_N f dx
 \ge c  N^{\alpha} \| P_N f\|_q^q, \label{e30a}
 \end{align}
 where the constant $c$ depends only on the dimension $d$ and $\alpha$.
For $\alpha=2$, there is an absolute constant $\tilde c>0$ such that for any $1<q<\infty$, any dyadic $N>0$ and
 any $f\in L_x^q(\mathbb R^d)$, we have the inequality
\begin{align}
- \int_{\mathbb R^d} (P_N \Delta f )|P_N f |^{q-2} P_N f dx
 \ge \tilde c \frac {q-1} {q^2} N^{2} \| P_N f\|_q^q. \label{e30b}
 \end{align}
\end{cor}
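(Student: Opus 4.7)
\medskip

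\textbf{Plan.} The corollary should follow from Theorem \ref{thm0} by differentiating at $t=0^+$. Set $u(t,x) = e^{-t|\nabla|^\alpha} P_N f(x)$, so that $u(0) = P_N f$ and, by Theorem \ref{thm0},
\begin{align*}
\|u(t)\|_q^q \le e^{-qctN^\alpha}\|P_N f\|_q^q, \qquad t\ge 0.
\end{align*}
Subtracting $\|P_N f\|_q^q$ from both sides, dividing by $t>0$, and letting $t\to 0^+$, I would obtain
\begin{align*}
\frac{d}{dt}\Big|_{t=0^+}\|u(t)\|_q^q \le -qc N^\alpha \|P_N f\|_q^q,
\end{align*}
provided the left-hand side exists. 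The case $\alpha=2$ is identical with $u(t)=e^{t\Delta}P_N f$ and the constant $\tilde c(q-1)/q^2$ in place of $c$.

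The remaining task is to identify the time derivative with the integral on the left side of \eqref{e30a}. Formally, since $\partial_t u = -|\nabla|^\alpha u$, one has
\begin{align*}
\frac{d}{dt}\|u(t)\|_q^q = q\int |u|^{q-2} u\,\partial_t u\,dx = -q\int (|\nabla|^\alpha u(t))\,|u(t)|^{q-2}u(t)\,dx,
\end{align*}
and at $t=0$ the multiplier $|\nabla|^\alpha$ commutes with $P_N$, giving $|\nabla|^\alpha u(0)=P_N|\nabla|^\alpha f$. Combining this with the inequality above yields \eqref{e30a} and, analogously, \eqref{e30b}.

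\textbf{Main technical point.} The step that requires care is the differentiation of $\|u(t)\|_q^q$ at $t=0^+$ when $1<q<2$, because then $s\mapsto |s|^{q-2}s$ is merely H\"older continuous and the pointwise derivative of $|u|^q$ can blow up where $u$ vanishes. My plan is to regularize by $(\eps+|u|^2)^{q/2}$, compute its time derivative rigorously (using the smoothness of $u(t,\cdot)$, which is automatic because $P_N f$ is a Schwartz-class convolution of $f\in L^q$ and the semigroup preserves this regularity), and pass to the limit $\eps\to 0$ via dominated convergence using that $|\nabla|^\alpha P_N f$ and $P_N f$ lie in every $L^r$ class. An alternative is to bypass pointwise differentiation entirely and use a convexity/one-sided derivative argument: the map $t\mapsto \|u(t)\|_q^q$ is convex-like enough that the upper right Dini derivative satisfies the desired bound, and that upper derivative agrees with the claimed integral by a standard duality identity for smooth $u$. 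Either way, once the identification of the derivative is justified, the corollary is immediate from Theorem \ref{thm0}.
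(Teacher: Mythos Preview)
Your overall strategy---differentiate the heat-flow estimate of Theorem \ref{thm0} at $t=0^+$ and identify the derivative of $\|u(t)\|_q^q$ with $-q\int(|\nabla|^\alpha P_N f)|P_N f|^{q-2}P_N f\,dx$---is exactly what the paper does. The only substantive difference is in how the differentiation under the integral is justified. Instead of your $\eps$-regularization or Dini-derivative route, the paper proves a separate lemma (Lemma \ref{lem5}) giving the pointwise maximal bound $\sup_{t>0}|e^{-t|\nabla|^\alpha}g(x)|\le C_{\alpha,d}(Mg)(x)$; applying this to $g=P_Nf$ and $g=|\nabla|^\alpha P_Nf$ yields a single $t$-independent dominant $H(x)=(M(|\nabla|^\alpha P_Nf))^q+(q-1)(Mf)^q\in L^1$ for $\partial_t|u|^q$, after which the Dominated Convergence Theorem applies directly for all $1<q<\infty$ without splitting into cases. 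This sidesteps the $1<q<2$ difficulty you flag, and it avoids the (slightly inaccurate) claim that $P_Nf$ lies in \emph{every} $L^r$---in fact only $L^r$ for $r\ge q$ is guaranteed, though that is all one needs. Your approach would also go through with a bit more care, but the maximal-function dominant is cleaner.
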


\begin{rem}
In \eqref{e30a} $P_N$ can be replaced by $P_{\ge N}$ or $P_{>N}$ or other similar frequency projection operators.
Note  that in Corollary \ref{cor1} the restriction
of $q$ is $1<q<\infty$. This is because we shall deduce
\eqref{e30a}--\eqref{e30b} from \eqref{e00}--\eqref{e00a} through a differentiation
argument. A rigorous justification of differentiating under the integral
requires $1<q<\infty$.
\end{rem}

\begin{rem}
 We stress that for $0<\alpha<2$ the constant $c$ in \eqref{e30a} depends only on $(d,\alpha)$. In particular it does not depend on
 the constant $q$. This is in sharp contrast with the full Laplacian case where the constant is proportional to $(q-1)/q^2$ which
 vanishes at $q=1$ and $q=\infty$.  In \cite{Dan2} (see equation (88) on page 1228 therein),
 Danchin effectively proved the inequality \eqref{e30b} by using an integration by parts argument. Our new proof here also reproduces
 the same constants. It should be possible to show that  \eqref{e30a}--\eqref{e30b} are essentially optimal. But we will not dwell on this
 issue here.
%Our new proof here uses the simple
 %inequality \eqref{erem_00a}
\end{rem}

\begin{rem}
 If we do not insist on obtaining the sharp constant dependence on $q$ (especially for $0<\alpha<2$), we can give a much shorter
 proof of a weaker version
 of  \eqref{e30a} which includes \eqref{e30b} as a special case. The starting point is the almost trivial inequality \eqref{erem_00a}.
By a rigorous differentiation and comparison argument at $t=0$ (see e.g. \eqref{pt_50c}--\eqref{e32} in the proof of Corollary \ref{cor1}), we
 arrive at the inequality
 \begin{align}
 \int_{\mathbb R^d} (P_N |\nabla|^{\alpha} f )|P_N f |^{q-2} P_N f dx
 \ge c^{\prime} \cdot \frac{q-1} {q^2} N^{\alpha} \| P_N f\|_q^q, \label{e30_new}
 \end{align}
 where $1<q<\infty$ and $c^{\prime} >0$ is an absolute constant. This is already enough for most applications in the local wellposedness theory
 of PDEs with fractional Laplacian dissipation.
\end{rem}

 Before we move on to other similar results, let us explain the ``mechanism'' of our proof.
 In some sense our proof is an upgraded version of the proof in Remark \ref{rem_weak}. In particular \eqref{e00} is a strengthened
 version of \eqref{einfty_weak} in the case $0<\alpha<2$ (and hence the improvement).  The proof of
\eqref{e00} is based
on frequency localization and Young's inequality. We briefly
explain the idea as follows. First notice that by scaling
it suffices to prove \eqref{e00} for $N=1$. By frequency
localization we have
\begin{align*}
e^{-t (2\pi|\xi|)^{\alpha}} \widehat{P_1 f}(\xi) &= e^{-t(2\pi|\xi|)^{\alpha}}
\psi(\xi) \hat f(\xi)  \notag \\
&=e^{-t (  (2\pi|\xi|)^{\alpha} + \epsilon \phi_1(\xi))}
\psi (\xi) \hat f(\xi),
\end{align*}
where $\psi(\xi)=\varphi(\xi)-\varphi(2\xi)$ (see \eqref{lp_def}) and $\phi_1(\xi)= \varphi(6 \xi)$.
In the last equality we used the fact that $\psi(\xi)=0$ for $|\xi| \le 1/2$
and $\phi_1(\xi)=0$ for $|\xi| \ge 1/3$. Now to prove \eqref{e00} it suffices to
show that the modified kernel
\begin{align*}
k_\epsilon(t,x) = \mathcal F^{-1} ( e^{-t ((2\pi|\xi|)^{\alpha} + \epsilon \phi_1(\xi))})
\end{align*}
has the bound $\| k_\epsilon(t,\cdot)\|_{L_x^1} \le e^{-ct}$ for some $c>0$. Since
$\hat k_\epsilon(t, 0)= e^{-t \epsilon \phi_1(0)} = e^{-t \epsilon}$,
we only need to prove that $k_{\epsilon}(t,x)$ is non-negative for $\epsilon$ sufficiently small.
The proof of this fact is given in Lemma \ref{lem0}. The main idea is to use the slow (power-like) decay (see \eqref{slow_decay})
of the L\'{e}vy semigroup when $0<\alpha<2$ which is stable under smooth perturbations.

It is fairly interesting to establish some analogues of Theorem \ref{thm0} and Corollary \ref{cor1} in the periodic
domain case. To fix notations, let the dimension $d\ge 1$ and $\mathbb T^d=\mathbb R^d/\mathbb Z^d$ be the usual periodic
torus. For a smooth periodic function $f:\, \mathbb T^d \to \mathbb R$, we adopt the Fourier expansion and
inversion formulae:
\begin{align*}
 f(x) & = \sum_{k\in \mathbb Z^d} \hat f(k) e^{2\pi i k\cdot x}, \quad x \in \mathbb T^d, \notag \\
 \hat f(n) &= \int_{\mathbb T^d} f(x) e^{-2\pi i n\cdot x} dx, \quad n \in \mathbb Z^d.
\end{align*}
For $0<\alpha \le 2$, the fractional Laplacian operator $|\nabla|^{\alpha}$ is defined by the relation
\begin{align*}
 \widehat{|\nabla|^{\alpha} f} (n) = (2\pi|n|)^{\alpha} \hat f(n),\quad n \in \mathbb Z^d.
\end{align*}
We shall say a function $f\in L^1(\mathbb T^d)$ has mean zero if $\hat f(0)=0$ or equivalently
\begin{align*}
 \int_{\mathbb T^d} f(x) dx =0.
\end{align*}

With these notations, we have

\begin{thm}[Improved heat flow estimate, periodic case] \label{thm0_period}
 Let the dimension $d\ge 1$. For any $0<\alpha < 2$, there is a constant $c_1>0$ depending only
 on $(\alpha,d)$ such that for any $1\le q\le \infty$ and any $f\in L^q(\mathbb T^d)$ with mean zero, we have
 \begin{align}
  \| e^{-t |\nabla|^{\alpha} } f \|_q \le e^{-c_1 t} \| f\|_q, \quad \forall\, t>0. \label{eNov30_1}
 \end{align}
For $\alpha=2$, there is an absolute constant $c_2>0$ such that for any $1<q<\infty$, $f\in L^q(\mathbb T^d)$ with
mean zero, we have
\begin{align}
 \| e^{t\Delta} f\|_q \le e^{- c_2 \frac{q-1} {q^2} t} \| f\|_q, \quad \forall\, t>0. \label{eNov30_2}
\end{align}
\end{thm}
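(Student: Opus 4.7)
The plan is to adapt the perturbation strategy of Theorem~\ref{thm0} to the periodic setting. The crucial observation is that the mean-zero hypothesis $\hat f(0)=0$ grants complete freedom to modify the Fourier symbol $(2\pi|n|)^\alpha$ at the single frequency $n=0$; replacing the value $0$ there by a small positive number $\epsilon$ costs nothing when applied to $f$, but introduces a decay factor $e^{-t\epsilon}$ into the $L^1$ mass of the convolution kernel.

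Concretely, for $0<\alpha<2$ I would fix $\epsilon>0$ (to be chosen at the end depending only on $(d,\alpha)$) and introduce the perturbed symbol $m_\epsilon(0)=\epsilon$, $m_\epsilon(n)=(2\pi|n|)^\alpha$ for $n\ne 0$. Since $\hat f(0)=0$, one has
\begin{align*}
e^{-t|\nabla|^\alpha}f = K_\epsilon(t,\cdot)\ast f,\qquad K_\epsilon(t,x):=K_0(t,x)+\bigl(e^{-t\epsilon}-1\bigr),
\end{align*}
where $K_0(t,x)=\sum_{n\in\mathbb Z^d}e^{-t(2\pi|n|)^\alpha}e^{2\pi in\cdot x}$ is the ordinary periodic L\'evy kernel. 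Once we know $K_\epsilon(t,\cdot)\ge 0$ for every $t>0$, the identity $\int_{\mathbb T^d}K_\epsilon\,dx=\hat K_\epsilon(t,0)=e^{-t\epsilon}$ together with Young's inequality on the torus immediately yields \eqref{eNov30_1} with $c_1=\epsilon$. So the entire task reduces to verifying pointwise positivity of $K_\epsilon$.

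I would handle positivity in two asymptotic regimes joined by a compact bridge. For short times $t\in(0,1]$ Poisson summation gives $K_0(t,x)=\sum_{y\in\mathbb Z^d}p_\alpha(t,x+y)\ge p_\alpha(t,x)$, and the classical pointwise lower bound $p_\alpha(t,x)\gtrsim t(|x|+t^{1/\alpha})^{-d-\alpha}$ for the L\'evy density (this is precisely the slow-decay property that breaks down at $\alpha=2$) gives $K_0(t,x)\ge c_{d,\alpha}t$ uniformly for $x\in\mathbb T^d$; since $1-e^{-t\epsilon}\le t\epsilon$, this forces $K_\epsilon\ge(c_{d,\alpha}-\epsilon)t\ge 0$ whenever $\epsilon\le c_{d,\alpha}$. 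For long times $t\ge 1$ the crude spectral-gap bound $|K_0(t,x)-1|\le\sum_{n\ne 0}e^{-t(2\pi|n|)^\alpha}\le C_{d,\alpha}e^{-(2\pi)^\alpha t}$ gives $K_\epsilon(t,x)\ge e^{-t\epsilon}-C_{d,\alpha}e^{-(2\pi)^\alpha t}\ge 0$ once $\epsilon<(2\pi)^\alpha$ and $t\ge T_\ast(\epsilon,d,\alpha)$. The main technical point will be bridging the intermediate range $t\in[1,T_\ast]$: on this compact set $K_0$ is continuous and strictly positive, hence bounded below by some $m_\ast>0$, and one must choose $\epsilon$ small enough that $1-e^{-T_\ast\epsilon}<m_\ast$. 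This can be done consistently because as $\epsilon\to 0$ the threshold $T_\ast$ tends to the finite limit $(\log C_{d,\alpha})/(2\pi)^\alpha$ while $1-e^{-T_\ast\epsilon}\to 0$ and $m_\ast$ remains bounded away from zero on the corresponding compact set.

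For $\alpha=2$ the slow-decay lower bound on $p_\alpha$ fails (consistent with Remark~\ref{rem_counter_p}), so the kernel-positivity route collapses and I would instead follow the interpolation strategy of Remark~\ref{rem_weak}. Plancherel together with the spectral gap at $|n|=1$ yields $\|e^{t\Delta}f\|_2\le e^{-(2\pi)^2 t}\|f\|_2$ for mean-zero $f$, while $\|e^{t\Delta}\|_{L^q\to L^q}\le 1$ at $q=1,\infty$ since the periodic heat kernel is a probability density. Riesz--Thorin interpolation between these endpoints produces $\|e^{t\Delta}f\|_q\le e^{-c_2(q-1)/q^2\cdot t}\|f\|_q$ for a suitable absolute constant $c_2>0$, after noting that the natural interpolation exponents $2(q-1)/q$ on $(1,2]$ and $2/q$ on $[2,\infty)$ both dominate a constant multiple of $(q-1)/q^2$ uniformly in $q$. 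This is precisely \eqref{eNov30_2}.
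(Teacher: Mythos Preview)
Your proposal is correct and rests on the same key idea as the paper: exploit $\hat f(0)=0$ to modify the kernel at the zero frequency, use the slow (power-like) decay of the L\'evy density to guarantee the perturbed kernel stays nonnegative, and finish with Young's inequality. For $\alpha=2$ your interpolation argument is exactly what the paper does.

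The one place your argument is more laborious than necessary is the treatment of $t>1$. You prove positivity of $K_\epsilon(t,\cdot)$ for \emph{all} $t>0$ by splitting into short, intermediate, and long time regimes and handling the circular dependence of $T_\ast$ on $\epsilon$. The paper sidesteps this entirely by invoking the semigroup property $e^{-t|\nabla|^\alpha}=(e^{-(t/m)|\nabla|^\alpha})^m$: once \eqref{eNov30_1} is known for $0<t\le 1$ it propagates automatically to all $t>0$. Consequently the paper only needs the short-time lower bound $K_0(t,x)\ge c_3 t$ (your first regime), and in fact it subtracts the simpler linear correction $c_3 t$ rather than $1-e^{-t\epsilon}$, obtaining $\|\tilde k(t,\cdot)\|_1=1-c_3t\le e^{-c_1 t}$ directly. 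Your route works, but the semigroup reduction makes the intermediate and long-time analysis unnecessary.
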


\begin{rem}
 One should notice again the subtle difference between the bounds \eqref{eNov30_1} for $0<\alpha<2$ and \eqref{eNov30_2} for $\alpha=2$.
 The main reason is that in the periodic setting our perturbation argument also relies heavily on the pointwise lower bound of the
 L\'{e}vy semigroup for sufficiently small $t$. When $0<\alpha<2$ the decay of $e^{-t|\nabla|^{\alpha}}$ is power-like. However
 for $\alpha=2$ this is no longer the case and the perturbation argument does not work.
\end{rem}

\begin{rem} \label{rem_counter_p}
 We stress that \eqref{eNov30_2} is optimal. In particular one should not expect the inequality
 \begin{align}
  \| e^{t\Delta} f \|_\infty \le e^{-ct} \| f\|_{\infty},  \label{erem_impossible}
 \end{align}
even for $t>0$ sufficiently small. To see this, we take any smooth $f$ on $\mathbb T^d$ with $\| f\|_{\infty} =1$ and zero mean.
Suppose $x_0\in \mathbb T^d$ and  $f(x)\equiv 1$ in some neighborhood $|x-x_0|\le \delta_0$ with $\delta_0>0$.  Write
\begin{align*}
 e^{t\Delta} f = k(t,\cdot)*f,
\end{align*}
where $*$ denote the usual convolution on $\mathbb T^d$ and $k(t,\cdot)$ is the periodic heat kernel. By using the Poisson summation formula
(see Lemma
\ref{lem_poi}),
it not difficult to check that for $|y|\le \frac 12$, $0<t<1$,  we have
\begin{align*}
 k(t,y)= \frac 1{(4\pi t)^{\frac d 2} } e^{-\frac{|y|^2}{4t}} + O(e^{-\frac C t}),
\end{align*}
where $C>0$ is some constant.  We then have for $t>0$ sufficiently small and some constant $C_{0}>0$,
\begin{align*}
 |(e^{t\Delta} f)(x_0)| &\ge \int_{|y|\le  \frac {\delta_0} 2} \frac 1 {(4\pi t)^{\frac d2} } e^{-\frac{|y|^2}{4t} } dy +
 O(e^{-\frac {C} t}) \\
 & \ge 1 + O( e^{-\frac {C_0} t} ).
\end{align*}
This disproves \eqref{erem_impossible}.

\end{rem}

An immediate consequence of Theorem \ref{thm0_period} is a family of generalized Poincare-type inequalities.

\begin{cor}[Generalized Poincare-type inequalities for periodic domains] \label{cor_period}
 Let the dimension $d\ge 1$.   For any $0<\alpha < 2$, there is a constant $c_1>0$ depending only
 on $(\alpha,d)$ such that for any any $f\in C^{2}(\mathbb T^d)$ with mean zero, we have
 \begin{align}
   \int_{\mathbb T^d} (|\nabla|^{\alpha} f) |f|^{q-2} f dx \ge c_1 \| f\|_q^q, \quad \forall\, 1<q<\infty. \label{eNov30_3}
 \end{align}
For $\alpha=2$, there is an absolute constant $c_2>0$ such that for any  $f\in C^{2}(\mathbb T^d)$ with
mean zero, we have
\begin{align}
 - \int_{\mathbb T^d} (\Delta f) |f|^{q-2} f dx \ge c_2 \frac{q-1} {q^2}\| f\|_q^q, \quad \forall\, 1<q<\infty. \label{eNov30_4}
\end{align}
\end{cor}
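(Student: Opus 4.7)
The plan is to deduce Corollary \ref{cor_period} from Theorem \ref{thm0_period} by differentiating the heat flow estimate at $t=0$, mirroring the deduction of Corollary \ref{cor1} from Theorem \ref{thm0} that the paper alludes to.

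First I would fix $f \in C^2(\mathbb T^d)$ with mean zero and set $g(t,x) = (e^{-t|\nabla|^{\alpha}} f)(x)$. Because $\widehat{g(t,\cdot)}(0) = e^{-t \cdot 0} \hat f(0) = 0$, the mean-zero condition is preserved along the semigroup, so Theorem \ref{thm0_period} applies to $g(t,\cdot)$ at every $t \ge 0$. Raising the resulting $L^q$ bound to the $q$-th power and subtracting $\|f\|_q^q$ gives
\begin{align*}
\|g(t,\cdot)\|_q^q - \|f\|_q^q \le (e^{-c_1 q t} - 1)\|f\|_q^q \quad \text{for } 0<\alpha<2,
\end{align*}
together with the analogous inequality featuring the exponent $-c_2 (q-1)t/q$ in the case $\alpha=2$.

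Next, I would divide by $t>0$ and let $t \to 0^+$. The right-hand side converges to $-c_1 q \|f\|_q^q$, respectively $-c_2(q-1)/q \cdot \|f\|_q^q$. On the left-hand side, the one-sided derivative at $0$ of $\|g(t,\cdot)\|_q^q$ can be computed by noting that for $1<q<\infty$ the map $s \mapsto |s|^q$ is $C^1$ with derivative $q|s|^{q-2}s$, and that $\partial_t g|_{t=0} = -|\nabla|^{\alpha} f$, giving
\begin{align*}
\frac{d}{dt}\bigg|_{t=0^+} \|g(t,\cdot)\|_q^q = -q \int_{\mathbb T^d} (|\nabla|^{\alpha} f)|f|^{q-2} f \, dx.
\end{align*}
Combining the two limits and dividing by $-q$ yields \eqref{eNov30_3} and \eqref{eNov30_4} respectively.

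The only technical obstacle is justifying the interchange of $d/dt$ with the spatial integral at $t=0^+$, and the restriction $1<q<\infty$ is exactly what makes this work: the $C^1$ failure of $s \mapsto |s|^q$ at $q=1$ is the reason for excluding that endpoint. Because $f \in C^2(\mathbb T^d)$ and the L\'evy semigroup acts smoothly on smooth data on the compact torus, both $g(t,\cdot)$ and $\partial_t g(t,\cdot) = -|\nabla|^{\alpha} g(t,\cdot)$ remain uniformly bounded on $[0,t_0]\times \mathbb T^d$ for small $t_0>0$. A direct dominated convergence argument (using the mean value theorem applied to $s\mapsto |s|^q$ combined with these uniform bounds on the compact domain $\mathbb T^d$) then justifies the differentiation under the integral and completes the proof. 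This is essentially the same rigorous differentiation scheme used to pass from Theorem \ref{thm0} to Corollary \ref{cor1}, and no additional modification is required in the periodic setting.
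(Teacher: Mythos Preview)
Your proposal is correct and follows exactly the route the paper intends: the paper states that the proof of Corollary \ref{cor_period} is ``quite similar to the proof of Corollary \ref{cor1}'' and omits it, and your argument is precisely that differentiation-at-$t=0$ scheme transplanted to the periodic setting. The only simplification worth noting is that, since $f\in C^2(\mathbb T^d)$ on a compact domain, the dominated convergence justification is easier than in the $\mathbb R^d$ case (where the paper invokes the Hardy--Littlewood maximal function via Lemma \ref{lem5}); your use of uniform bounds on $[0,t_0]\times\mathbb T^d$ is the natural replacement.
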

The proof of Corollary \ref{cor_period} is quite similar to the proof of Corollary \ref{cor1} and therefore we omit it.

\begin{rem}
 In \cite{KS_book} (see Proposition A.14.1 on page 291 therein), by using a contradiction argument,
 the authors proved the inequality \eqref{eNov30_4} for the case $2\le q<\infty$
 with a dimension-dependent constant. Our new proof here covers the whole range $1<q<\infty$ with a dimension-independent constant $c_2$.
\end{rem}

We conclude the introduction by setting up some

\subsubsection*{Notations}
We will  need to use the
Littlewood-Paley frequency projection operators. Let $\varphi(\xi)$ be a smooth bump
function supported in the ball $|\xi| \leq 2$ and equal to one on
the ball $|\xi| \leq 1$.  For each dyadic number $N \in 2^{\mathbb Z}$ we
define the Littlewood-Paley operators
\begin{align}
\widehat{P_{\leq N}f}(\xi) &:=  \varphi(\xi/N)\hat f (\xi), \notag\\
\widehat{P_{> N}f}(\xi) &:=  [1-\varphi(\xi/N)]\hat f (\xi), \notag\\
\widehat{P_N f}(\xi) &:=  [\varphi(\xi/N) - \varphi (2 \xi /N)] \hat
f (\xi). \label{lp_def}
\end{align}
Similarly we can define $P_{<N}$, $P_{\geq N}$, and $P_{M < \cdot
\leq N} := P_{\leq N} - P_{\leq M}$, whenever $M$ and $N$ are dyadic
numbers.

\subsection*{Acknowledgements}
The author would like to thank Prof. Ya.G. Sinai for informing him the book \cite{KS_book}.
D. Li was supported in part by NSF under agreement No. DMS-1128155. Any opinions, findings
and conclusions or recommendations expressed in this material are those of the authors and
do not necessarily reflect the views of the National Science Foundation. D. Li was also supported in part
by an Nserc discovery grant.
\section{Proof of main theorems}

We begin with a simple lemma.  Let $\phi_1 \in C_c^{\infty} (\mathbb R^d)$ be a radial function such that
\begin{align} \label{e_phi1}
\phi_1(x)=
\begin{cases}
 1, \quad |x|\le \frac 14, \\
0, \quad |x| \ge \frac 13.
\end{cases}
\end{align}
  Let $\epsilon>0$ and define for $t>0$,
\begin{align} \label{eq_F}
F_{\epsilon} (t,x) =
\int_{\mathbb R^d} e^{-t (2\pi|\xi|)^{\alpha}} { (e^{-\epsilon t \phi_1(\xi) } -1)}  e^{2\pi i \xi \cdot x} d\xi.
\end{align}
Also denote
\begin{align}
p(t,x) = \mathcal F^{-1} ( e^{-t(2\pi|\xi|)^{\alpha}}) =  \int_{\mathbb R^d}
e^{-t(2\pi |\xi|)^{\alpha}} e^{ 2\pi i\xi \cdot x} d\xi, \label{eq_P}
\end{align}
and
\begin{align}
k_{\epsilon}(t,x) = \mathcal F^{-1} ( e^{-t ((2\pi |\xi|)^{\alpha} + \epsilon \phi_1(\xi)} )
= \int_{\mathbb R^d}
e^{-t  ((2\pi |\xi|)^{\alpha} +\epsilon \phi_1(\xi) )} e^{2\pi i\xi \cdot x} d\xi. \label{eq_k}
\end{align}
Clearly
\begin{align} \label{e_23a}
k_{\epsilon}(t,x) = p(t,x) + F_{\epsilon}(t,x).
\end{align}

The following lemma shows that for $\epsilon>0$ sufficiently small $k_{\epsilon}(t,x)$ is still
a positive kernel.

\begin{lem} \label{lem0}

Let  $d\ge 1$ and $0<\alpha<2$. There exists a constant $C_1=C_1(d,\alpha)>0$ such
that  for any $x \in \mathbb R^d$, $t>0$,
\begin{align} \label{to1a}
\frac 1 {C_1}   \cdot \frac {t} { (t^{\frac 1 {\alpha}} + |x| )^{d+\alpha} }
\le p(t,x) \le {C_1}  \cdot \frac {t} { (t^{\frac 1 {\alpha}} + |x| )^{d+\alpha} } .
\end{align}
There exists a constant $\epsilon_0=\epsilon_0(d,\alpha)>0$
 such that if $0<\epsilon<\epsilon_0$, then
\begin{align} \label{e4_7}
p(t,x) - |F_{\epsilon}(t,x)| > 0, \qquad \forall\, x \in \mathbb R^d,  0<t \le 1.
\end{align}
In particular
\begin{align} \label{39a}
\| k_{\epsilon} (t,\cdot) \|_{L_x^1} \le e^{- \epsilon t}, \qquad \forall\, 0<t \le 1.
\end{align}

\end{lem}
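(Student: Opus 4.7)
The proof splits into three stages corresponding to the three claims. For the two-sided bound \eqref{to1a} on $p(t,x)$, my plan is to first reduce to the case $t=1$ via the self-similarity $p(t,x)=t^{-d/\alpha}p(1,t^{-1/\alpha}x)$, which follows by the change of variables $\xi\mapsto t^{1/\alpha}\xi$ in \eqref{eq_P}. It then suffices to establish $p(1,y)\asymp (1+|y|)^{-(d+\alpha)}$ for all $y\in\mathbb R^d$. This is the classical pointwise estimate for the symmetric $\alpha$-stable (L\'evy) density when $0<\alpha<2$: positivity and the upper bound near the origin follow from $|e^{-(2\pi|\xi|)^\alpha}|\le 1$, while the matching polynomial decay at infinity comes from splitting the Fourier integral into low and high frequency pieces and repeatedly integrating by parts against the oscillation $e^{2\pi i\xi\cdot y}$ (alternatively one invokes the tabulated asymptotic expansion for $\alpha$-stable densities).

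For the positivity estimate \eqref{e4_7}, the quantity to control is
\begin{align*}
g_t(\xi):=e^{-t(2\pi|\xi|)^\alpha}\bigl(e^{-\epsilon t\phi_1(\xi)}-1\bigr),
\end{align*}
which by \eqref{e_phi1} is compactly supported in $|\xi|\le 1/3$ and smooth in $\xi$. Using $|e^{-s}-1|\le s$ for $s\ge 0$ together with the chain rule, I would check that $|\partial^\beta g_t(\xi)|\le C_\beta\,\epsilon t$ uniformly for $t\in(0,1]$ and $\epsilon\in(0,1]$, the key point being that every derivative of $e^{-\epsilon t\phi_1}-1$ carries at least one factor of $\epsilon t$. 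Integrating by parts $k$ times in the Fourier inversion defining $F_\epsilon$ then yields, for every integer $k\ge 0$, a uniform bound
\begin{align*}
|F_\epsilon(t,x)|\le C_k\,\epsilon t\,(1+|x|)^{-k},\qquad 0<t\le 1.
\end{align*}

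With these ingredients, I would prove $p(t,x)>|F_\epsilon(t,x)|$ by splitting $\mathbb R^d$ into two regions. For $|x|\le 1$, the lower bound in \eqref{to1a} gives $p(t,x)\ge ct/(1+t^{1/\alpha})^{d+\alpha}\ge c't$ uniformly for $0<t\le 1$, while the $k=0$ bound above gives $|F_\epsilon(t,x)|\le C_0\epsilon t$; choosing $\epsilon_0<c'/C_0$ handles this region. For $|x|>1$, \eqref{to1a} gives $p(t,x)\ge c''t|x|^{-(d+\alpha)}$, and the $k=d+\alpha+1$ bound gives $|F_\epsilon(t,x)|\le C\epsilon t|x|^{-(d+\alpha+1)}$; the extra power of $|x|$ in the denominator beats the constant multiple of $\epsilon$ whenever $\epsilon$ is small, uniformly for $|x|\ge 1$. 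Taking $\epsilon_0$ to be the smaller of the two thresholds proves \eqref{e4_7}.

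Finally \eqref{39a} is immediate: \eqref{e4_7} together with \eqref{e_23a} shows $k_\epsilon(t,\cdot)\ge 0$, so
\begin{align*}
\|k_\epsilon(t,\cdot)\|_{L^1_x}=\int_{\mathbb R^d}k_\epsilon(t,x)\,dx=\widehat{k_\epsilon}(t,0)=e^{-t\epsilon\phi_1(0)}=e^{-\epsilon t},
\end{align*}
using $\phi_1(0)=1$ from \eqref{e_phi1}. The main obstacle I expect is producing Schwartz-type decay of $F_\epsilon$ with a constant that is genuinely linear in $\epsilon$ (and not merely bounded in $\epsilon$); without this $\epsilon$-gain, $|F_\epsilon|$ would be comparable to $p$ in the region near the origin and positivity could not be forced by shrinking $\epsilon$. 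Once the linear gain is in place, balancing it against the power-like lower bound on $p$ through the two-region decomposition is essentially routine.
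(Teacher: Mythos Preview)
Your argument for \eqref{e4_7} has a genuine gap at the smoothness claim for
\[
g_t(\xi)=e^{-t(2\pi|\xi|)^\alpha}\bigl(e^{-\epsilon t\phi_1(\xi)}-1\bigr).
\]
For $0<\alpha<2$ the function $e^{-t(2\pi|\xi|)^\alpha}$ is \emph{not} smooth at $\xi=0$: its $|\beta|$-th derivatives carry factors of order $|\xi|^{\alpha-|\beta|}$, which blow up once $|\beta|>\alpha$. Since $\phi_1(0)=1$, the factor $e^{-\epsilon t\phi_1(\xi)}-1$ equals $e^{-\epsilon t}-1\ne 0$ in a full neighborhood of the origin and therefore does nothing to kill this singularity. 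Consequently the bound $|\partial^\beta g_t|\le C_\beta\,\epsilon t$ fails for $|\beta|$ large, and the conclusion $|F_\epsilon(t,x)|\le C_k\,\epsilon t\,(1+|x|)^{-k}$ for arbitrary $k$ is false; the true spatial decay of $F_\epsilon$ is exactly $|x|^{-(d+\alpha)}$, matching that of $p(t,x)$. Your $|x|>1$ comparison, which relies on the extra factor $|x|^{-1}$, therefore collapses.

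The paper circumvents this by factoring rather than treating $g_t$ as a single symbol: it writes $F_\epsilon(t,\cdot)=p(t,\cdot)*w(t,\cdot)$ with
\[
w(t,y)=\mathcal F^{-1}\bigl(e^{-\epsilon t\phi_1(\cdot)}-1\bigr)(y)=\epsilon t\,w_1(t,y),
\]
and now $w_1$ \emph{is} Schwartz (since $\phi_1\in C_c^\infty$), with $|w_1(t,y)|\le C(1+|y|)^{-N}$ uniformly in $0<\epsilon t\le 1$. Comparing this to the lower bound in \eqref{to1a} gives $|w(t,y)|\le C\epsilon\,p(t,y)$ for $0<t\le1$, and then positivity of $p$ plus the semigroup identity $p(t)*p(t)=p(2t)$ yield $|F_\epsilon|\le C\epsilon\,p(2t)\le C'\epsilon\,p(t)$. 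The key maneuver you are missing is this separation of the non-smooth L\'evy symbol from the smooth perturbation via convolution; once that is done, the rest of your outline (the $\epsilon$-gain in $w$ and the final $L^1$ computation for \eqref{39a}) goes through exactly as you wrote.
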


\begin{proof}[Proof of Lemma \ref{lem0}]
The bound \eqref{to1a} is a well-known result, cf. \cite{BG60}:
\begin{align} \label{slow_decay}
\frac 1 {C_1} \min\{ t^{-d/\alpha},\; \frac t
{|x|^{d+\alpha}} \}
\le p(t,x) \le {C_1} \min\{ t^{-d/\alpha},\; \frac t
{|x|^{d+\alpha}} \}
\end{align}
which is clearly equivalent to \eqref{to1a}.  By \eqref{eq_F} we have
\begin{align}
F_{\epsilon}(t,x) = p(t)*w(t)= \int_{\mathbb R^d} p(t,x-y) w(t,y)dy, \label{e_conv}
\end{align}
where
\begin{align*}
w(t,y)&= \epsilon t \int_{\mathbb R^d}  \frac {e^{-\epsilon t \phi_1(\xi)} -1}
{\epsilon t} e^{2\pi i \xi \cdot y} d\xi \notag \\
&=: \epsilon t \, w_1(t,y).
\end{align*}
It is not difficult to check that for any $0<\epsilon t \le 1$, we have
\begin{align*}
|w_1(t,y)| \le C(d,\alpha)\cdot \frac 1 {(1+|y|^2)^{d+10}}, \qquad \forall\, y \in \mathbb R^d.
\end{align*}
Comparing this with \eqref{to1a},  we get
\begin{align*}
|w(t,y)| \le C(d, \alpha) \cdot \epsilon\cdot  p(t,y), \qquad \forall\, y \in \mathbb R^d, \, 0<t \le 1.
\end{align*}
Plugging it into \eqref{e_conv} and using the fact that $p(t)*p(t)=p(2t)$,  we get
\begin{align}
F_{\epsilon}(t,x) &\le C(d,\alpha) \cdot \epsilon p(2t,x), \notag \\
& \le \frac 12 p(t,x), \qquad \forall\, x \in \mathbb R^d, \, 0<t \le 1, \notag
\end{align}
where we used the fact $p(2t,x) \le const \cdot p(t,x)$ and chose $\epsilon$ sufficiently small.
Clearly \eqref{e4_7} follows. Finally \eqref{e4_7} and
\eqref{e_23a} implies that $k_{\epsilon}(t,x)$ is positive everywhere.  The bound
\eqref{39a} follows from Fourier transform and the fact that $\phi_1(0)=1$.
\end{proof}

Now we are ready to complete the
\begin{proof}[Proof of Theorem \ref{thm0}]
First we note that \eqref{e00a} is already proved in Remark \ref{rem_weak}. Therefore we only need to prove
\eqref{e00} for $0<\alpha<2$.

By a scaling argument
we only need to prove \eqref{e00} for $N=1$. In view of the usual convolution property of the heat semigroup, i.e.
\begin{align*}
 e^{-t|\nabla|^{\alpha} } = \Bigl( e^{- \frac t {m} |\nabla|^{\alpha}} \Bigr)^m,
\end{align*}
it suffices to prove
\eqref{e00} for $0<t\le 1$.
Now recall that
\begin{align*}
\widehat{P_1 f}(\xi) = \psi(\xi) \hat f(\xi),
\end{align*}
where $\psi$ is compactly supported in the annulus $\{\xi:\; 1/2 \le |\xi| \le 2 \}$.  In view of this
localization property, we can smoothly redefine the kernel $e^{-t(2\pi|\xi|)^{\alpha}}$ on the tail part $\{\xi:\; |\xi| < \frac 12\}$
such that
\begin{align} \label{id_1}
e^{-t(2\pi|\xi|)^{\alpha}} \widehat{P_1 f}(\xi) = e^{-t \left( (2\pi|\xi|)^{\alpha}+ \epsilon \phi_{1} (\xi) \right)} \widehat{P_1 f}(\xi),\qquad \forall\, \xi \in \mathbb R^d,
\end{align}
where we choose $\phi_{1} (\xi) $ as in \eqref{e_phi1}.
Now denote $\phi_{\epsilon}(\xi)=(2\pi |\xi|)^{\alpha} + \epsilon \phi_1(\xi)$.

Therefore we only need to prove for $\epsilon>0$ sufficiently small,
\begin{align*}
\left\| \Bigl(\mathcal F^{-1} (e^{-t \phi_{\epsilon} } ) \Bigr)
* P_1 f \right\|_q \le e^{-\epsilon t} \| P_1 f\|_q, \quad \forall\, 0< t \le 1.
\end{align*}

But this follows easily from Lemma \ref{lem0} and Young's inequality.
\end{proof}

To prove Corollary \ref{cor1}, we need the following simple lemma.

\begin{lem} \label{lem5}
Let $0<\alpha\le 2$. Let $f \in L^1_{loc}(\mathbb R^d)$. Then there is
a constant $C_{\alpha,d}>0$ depending only on $(\alpha,d)$, such that
\begin{align}
\sup_{0< t <\infty} | (e^{-t |\nabla|^{\alpha}} f) (t,x) | \le C_{\alpha,d} (Mf)(x),
\label{Mf_bound}
\end{align}
where $Mf$ is the Hardy-Littlewood maximal function defined as
\begin{align*}
(Mf)(x)= \sup_{B\ni x} \frac 1{|B|} \int_B |f|.
\end{align*}
\end{lem}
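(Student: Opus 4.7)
My plan is to realize $e^{-t|\nabla|^\alpha}$ as convolution with the kernel $p(t,\cdot)$ defined in \eqref{eq_P}, then dominate $p(t,\cdot)$ by a radial, radially decreasing, integrable majorant whose $L^1$ norm is independent of $t$, and finally invoke the classical fact that convolution against such a majorant is pointwise controlled by the Hardy--Littlewood maximal function.

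First I would write
\begin{align*}
(e^{-t|\nabla|^\alpha} f)(x) = (p(t,\cdot) * f)(x).
\end{align*}
The key structural property is that, as a function of $x$, $p(t,x)$ is radial and nonincreasing in $|x|$ (it is the Fourier transform of a radial nonnegative function of $|\xi|$, after all). For $0<\alpha<2$, the upper bound \eqref{to1a} from Lemma~\ref{lem0} supplies an explicit radial majorant
\begin{align*}
p(t,x) \le C_1 \cdot \frac{t}{(t^{1/\alpha} + |x|)^{d+\alpha}} =: \Psi_t(x),
\end{align*}
and a change of variables $x = t^{1/\alpha} y$ shows that
\begin{align*}
\int_{\mathbb R^d} \Psi_t(x)\,dx = \int_{\mathbb R^d} \frac{dy}{(1+|y|)^{d+\alpha}} =: A_{\alpha,d} < \infty,
\end{align*}
uniformly in $t>0$. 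For $\alpha=2$ I would instead use the explicit Gaussian $p(t,x) = (4\pi t)^{-d/2} e^{-|x|^2/(4t)}$, which is likewise radial, decreasing, and has $L^1$ norm $1$ for every $t>0$.

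Next I would apply the standard fact (a direct computation using ``level-set slicing'' of a radial decreasing function into a superposition of normalized indicators $|B_r|^{-1} \mathbf 1_{B_r}$; see e.g.\ Stein, \emph{Singular Integrals}): if $\Psi \in L^1(\mathbb R^d)$ is nonnegative, radial, and nonincreasing in $|x|$, then for every $g \in L^1_{\mathrm{loc}}$,
\begin{align*}
|(\Psi * g)(x)| \le \|\Psi\|_{L^1} \cdot (Mg)(x).
\end{align*}
Applying this with $\Psi = \Psi_t$ in the case $0<\alpha<2$, and $\Psi = p(t,\cdot)$ in the case $\alpha=2$, together with the triangle inequality $|p(t,\cdot)*f|(x) \le \Psi_t*|f|(x)$ (which uses $p(t,\cdot)\ge 0$, itself a consequence of \eqref{to1a} or Gaussianity), we obtain
\begin{align*}
|(e^{-t|\nabla|^\alpha} f)(x)| \le C_1 A_{\alpha,d} \cdot (Mf)(x),
\end{align*}
with the constant independent of $t$. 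Taking $\sup_{t>0}$ on the left-hand side completes the proof.

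I do not expect a serious obstacle: the positivity and radial monotonicity of $p(t,\cdot)$ are already encoded in earlier remarks of the paper, the scaling invariance $p(t,x) = t^{-d/\alpha} p(1, x/t^{1/\alpha})$ reduces all quantitative work to a single $t$-independent integral, and the maximal-function domination of convolutions by radial decreasing $L^1$ kernels is standard. The only small care-point is unifying the two regimes $0<\alpha<2$ and $\alpha=2$, which I handle separately but via the same template.
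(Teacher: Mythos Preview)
Your proof is correct and follows essentially the same approach as the paper: both use the kernel upper bound \eqref{to1a} and then dominate the convolution by the maximal function---you invoke the standard Stein lemma for radial decreasing $L^1$ kernels, while the paper simply reproves that lemma by an explicit dyadic decomposition into annuli $2^{k-1}\le |y|\le 2^k$. One minor quibble: your parenthetical claim that $p(t,\cdot)$ is radially nonincreasing ``because it is the Fourier transform of a radial nonnegative function'' is not a valid justification (that implication fails in general), but since you actually apply the maximal-function lemma only to the explicit majorant $\Psi_t$ (and to the Gaussian when $\alpha=2$), this slip is harmless.
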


\begin{proof}[Proof of Lemma \ref{lem5}]
Denote $p(y)=\mathcal F^{-1}(e^{-(2\pi |\xi|)^{\alpha}})$. By \eqref{to1a} we have
for some constant $C_1>0$,
$$p(y) \le C_1 \cdot (1+|y|)^{-(d+\alpha)}.$$
Therefore for $0<t <\infty$,
\begin{align*}
| &(e^{-t|\nabla|^{\alpha}} f )(t,x) | \notag \\
& \le  \int_{\mathbb R^d} p(y)|f(x+t^{\frac 1 {\alpha}} y) | dy \notag\\
& \le \int_{|y| \le 1} p(y) |f(x+ t^{\frac 1 {\alpha}} y)| dy+ \sum_{k=1}^{\infty} \int_{2^{k-1} \le |y|
\le 2^k} p(y) |f(x+ t^{\frac 1 {\alpha}} y)| dy \notag \\
& \le C_1 \int_{|y| \le 1} |f(x+t^{\frac 1 {\alpha}} y)|dy+C_1 \sum_{k=1}^{\infty} 2^{-(k-1)(d+\alpha)}
\int_{|y| \le 2^k} |f(x+ t^{\frac 1 {\alpha}} y) | dy \notag \\
& \le C_1 2^{d+\alpha} \sum_{k\ge 0} 2^{-k\alpha} \sup_{k^{\prime} \ge 0}  \int_{|y| \le 1}
|f(x+ 2^{k^{\prime}} t^{\frac 1 {\alpha}} y )|dy \notag \\
& \le C_{d,\alpha} (Mf)(x).
\end{align*}
\end{proof}

\begin{proof}[Proof of Corollary \ref{cor1}]
Define
\begin{align*}
F(t) = \int_{\mathbb R^d} | e^{-t|\nabla|^{\alpha}} P_N f |^q dx
\end{align*}
Note that for each $t\ge 0$,
\begin{align*}
\partial_t ( e^{-t |\nabla|^{\alpha} } P_N f) & =  -e^{-t |\nabla|^{\alpha}} |\nabla|^{\alpha} P_N f.  \notag
\end{align*}
Since $1<q<\infty$, we get
\begin{align}
&\partial_t ( |e^{-t |\nabla|^{\alpha} } P_N f |^q)  \notag \\
=&\;  - q(e^{-t |\nabla|^{\alpha}} |\nabla|^{\alpha} P_N f)
\cdot |e^{-t|\nabla|^{\alpha}}P_N f|^{q-2} \cdot
e^{-t |\nabla|^{\alpha}} P_N f. \label{pt_50a}
\end{align}
By Lemma \ref{lem5} and \eqref{pt_50a}, we have
\begin{align}
\sup_{t\ge 0} &\left| \biggl(\partial_t (|e^{-t |\nabla|^{\alpha} } P_N f |^q) \biggr)(t,x) \right| \notag \\
& \le q M(|\nabla|^{\alpha}P_N f)(x) \cdot \left( M(f)(x) \right)^{q-1} \notag \\
& \le \left(M(|\nabla|^{\alpha}P_N f)(x)\right)^q + (q-1) \left( M(f)(x) \right)^{q} \notag \\
& =: H(x). \label{pt_50b}
\end{align}
Since $1<q<\infty$ and $f \in L^q_x(\mathbb R^d)$, it is easy to check that $H \in L_x^1(\mathbb R^d)$.

Now denote $b(t,x) = \partial_t ( |e^{-t |\nabla|^{\alpha}} P_N f|^q)(t,x)$. Observe that
by \eqref{pt_50a}, it is easy to check that $b$ is continuous in $(t,x)$ and consequently
\begin{align}
\lim_{\delta \downarrow 0} \frac {\int_0^\delta b(s,x) ds} \delta = -q (|\nabla|^{\alpha}P_N f)(x)
|P_N f(x)|^{q-2} P_N f(x), \qquad \forall\, x \in \mathbb R^d. \label{pt_50c}
\end{align}
Now let $0<h<1$ and write
\begin{align*}
 \frac{F(h)-F(0)} h
=  \int_{\mathbb R^d} \frac{ \int_0^h b(s,x) ds} h dx.
\end{align*}

By \eqref{pt_50b}, we have
\begin{align}
\sup_{0 <h <1} \Bigl|\frac{ \int_0^h b(s,x) ds} h\Bigr| \le H(x). \label{pt_50d}
\end{align}

By \eqref{pt_50c}--\eqref{pt_50d} and the Lebesgue Dominated Convergence Theorem, we obtain that $F$ is right-differentiable
at $t=0$ and
\begin{align*}
F^{\prime}(0+) = -q \int_{\mathbb R^d} (|\nabla|^{\alpha}P_N f)(x)
|P_N f(x)|^{q-2} P_N f(x) dx.
\end{align*}

In particular we have
\begin{align}
\text{LHS of \eqref{e30a}} = - \frac 1 q F^{\prime}(0+).  \label{e31}
\end{align}

On the other hand by using Theorem \ref{thm0}, we have for any $t>0$,
\begin{align*}
\frac {F(0) -F(t)} {t} \ge \frac {1-e^{-ctqN^{\alpha}}} t  \| P_N f \|_q^q.
\end{align*}
Taking the limit $t\to 0$ immediately give us
\begin{align}
- F^{\prime}(0) \ge c q N^{\alpha} \|P_N f \|_q^q. \label{e32}
\end{align}

By \eqref{e31} this gives us \eqref{e30a}.

\end{proof}

For the periodic case we recall the following standard Poisson summation formula.

\begin{lem}[Poisson summation formula] \label{lem_poi}
 Let $f$ be a Schwartz function on $\mathbb R^d$ and denote by $\hat f$ its Fourier transform on $\mathbb R^d$. Then
 \begin{align*}
  \sum_{n \in \mathbb Z^d} f(x+n) = \sum_{n \in \mathbb Z^d} \hat f(n) e^{2\pi i n \cdot x}, \quad \forall\, x \in \mathbb R^d.
 \end{align*}

\end{lem}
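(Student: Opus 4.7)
The plan is to use the standard two-step recipe: periodize $f$, then identify the resulting Fourier coefficients on the torus with the Fourier transform values $\hat f(n)$. Specifically, I would set
\begin{align*}
F(x) = \sum_{n \in \mathbb Z^d} f(x+n),
\end{align*}
and first observe that because $f$ is Schwartz the series converges absolutely and uniformly on compact sets together with all its derivatives (majorize each derivative by $C_k (1+|x+n|)^{-k}$ and sum a convergent tail). Hence $F$ is smooth and $\mathbb Z^d$-periodic, so it is a well-defined smooth function on $\mathbb T^d$ and admits an absolutely convergent Fourier series.

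Next I would compute the Fourier coefficients of $F$ on $\mathbb T^d$. For $m\in\mathbb Z^d$,
\begin{align*}
\int_{\mathbb T^d} F(x) e^{-2\pi i m\cdot x}\,dx
= \sum_{n\in\mathbb Z^d} \int_{\mathbb T^d} f(x+n) e^{-2\pi i m\cdot x}\,dx,
\end{align*}
where the interchange of sum and integral is justified by the absolute convergence above. Shifting the variable of integration by $-n$ in each summand, and using $e^{2\pi i m\cdot n}=1$ since $m\cdot n\in\mathbb Z$, the terms glue into a single integral over $\mathbb R^d$:
\begin{align*}
\sum_{n\in\mathbb Z^d} \int_{\mathbb T^d+n} f(y) e^{-2\pi i m\cdot y}\,dy
= \int_{\mathbb R^d} f(y) e^{-2\pi i m\cdot y}\,dy = \hat f(m).
\end{align*}
Thus the $m$-th Fourier coefficient of $F$ on $\mathbb T^d$ is exactly $\hat f(m)$.

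Finally, since $f$ is Schwartz its Fourier transform is also Schwartz, so $\sum_m |\hat f(m)|<\infty$ and the Fourier series $\sum_m \hat f(m) e^{2\pi i m\cdot x}$ converges absolutely and uniformly. Because $F$ is smooth, Fourier inversion on the torus gives $F(x)=\sum_m \hat f(m) e^{2\pi i m\cdot x}$ pointwise, which is the claimed identity. The only mildly delicate point is justifying the swap of sum and integral and the validity of pointwise Fourier inversion on $\mathbb T^d$; both are immediate from the Schwartz assumption on $f$, so there is no genuine obstacle, and the proof is essentially a bookkeeping exercise.
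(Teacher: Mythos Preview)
Your argument is the standard periodization proof of Poisson summation and is correct as written; the Schwartz hypothesis cleanly justifies the sum--integral interchange and the pointwise Fourier inversion on $\mathbb T^d$, so there is no gap. Note, however, that the paper does not actually prove this lemma: it is simply recalled as a well-known fact, so there is no ``paper's own proof'' to compare against --- your write-up supplies what the paper omits.
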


As is well known, the Poisson summation formula holds for general continuous functions of moderate decrease including the heat
kernels
\begin{align}
 k_{\alpha}(t,\cdot) = \mathcal F^{-1} ( e^{-t (2\pi |\xi|)^{\alpha}}), \quad 0<\alpha\le 2, t>0. \notag
\end{align}
Denote the periodic kernel
\begin{align}
 k_{\alpha}^{per} (t,x) = \sum_{n\in \mathbb Z^d} e^{-t(2\pi |n|)^{\alpha}} e^{2\pi i n\cdot x}. \notag
\end{align}

The next lemma gives a lower bound on $k_{\alpha}^{per}(t,x)$. It is amusing that later  this lower bound is used to prove
an upper bound of the heat kernel.

\begin{lem}[Short time lower bound of the fractional heat kernel] \label{lem_lower}
Let the dimension $d\ge 1$ and let $0<\alpha<2$. There is a constant $c_3>0$ depending only on $(\alpha,d)$ such that
\begin{align}
 k_{\alpha}^{per}(t,x) \ge c_3 t, \quad \forall\, 0<t\le 1, \, x \in \mathbb T^d. \label{eDe_1}
\end{align}

\end{lem}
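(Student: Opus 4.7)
My plan is to reduce the lower bound for the periodic kernel to the known pointwise lower bound for the non-periodic L\'{e}vy kernel $p(t,x)$ in \eqref{to1a} via Poisson summation, then keep only one favorable term on the right.

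The first step is to apply Lemma \ref{lem_poi} to $f(y) = p(t,y) = \mathcal{F}^{-1}(e^{-t(2\pi|\xi|)^\alpha})(y)$, whose Fourier transform is $\hat{f}(\xi) = e^{-t(2\pi|\xi|)^\alpha}$. This gives the identity
\begin{align*}
 k_\alpha^{per}(t,x) = \sum_{n \in \mathbb Z^d} p(t, x+n), \qquad x \in \mathbb T^d.
\end{align*}
Since $0<\alpha<2$, the bound \eqref{to1a} (equivalently \eqref{slow_decay}) shows that $p(t,\cdot)$ is strictly positive on $\mathbb R^d$, so every term in the sum above is nonnegative.

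The second step is to extract one good term. Identify $\mathbb T^d$ with the fundamental domain $[-\tfrac12, \tfrac12]^d$. Then for any $x \in \mathbb T^d$ there is a representative in this cube satisfying $|x| \le \tfrac{\sqrt d}{2}$, so the $n=0$ term can be used directly. Applying the lower bound in \eqref{to1a},
\begin{align*}
 k_\alpha^{per}(t,x) \;\ge\; p(t,x) \;\ge\; \frac{1}{C_1}\cdot\frac{t}{\bigl(t^{1/\alpha} + |x|\bigr)^{d+\alpha}}
 \;\ge\; \frac{1}{C_1}\cdot\frac{t}{\bigl(t^{1/\alpha} + \tfrac{\sqrt d}{2}\bigr)^{d+\alpha}}.
\end{align*}
For $0<t\le 1$ and any $\alpha>0$ one has $t^{1/\alpha} \le 1$, so the denominator is bounded above by $\bigl(1+\tfrac{\sqrt d}{2}\bigr)^{d+\alpha}$, and one obtains
\begin{align*}
 k_\alpha^{per}(t,x) \;\ge\; \frac{t}{C_1\bigl(1+\tfrac{\sqrt d}{2}\bigr)^{d+\alpha}} \;=:\; c_3\, t,
\end{align*}
with $c_3 = c_3(\alpha,d) > 0$, which is \eqref{eDe_1}.

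There is essentially no obstacle here: the proof is a direct combination of Poisson summation, the positivity of the L\'{e}vy kernel, and the lower half of the two-sided estimate \eqref{to1a}. The only thing to be mindful of is the explicit dependence on $(\alpha,d)$ (so that $c_3$ does not degenerate in the stated range), and the restriction $t\le 1$, which is used to absorb $t^{1/\alpha}$ into an absolute constant inside the denominator. The crucial structural point is that the $\alpha<2$ assumption is what guarantees the power-like (rather than Gaussian) lower bound \eqref{slow_decay}, and hence the clean linear-in-$t$ lower bound on $k_\alpha^{per}$; this is exactly where the $\alpha=2$ case would fail and mirrors the dichotomy already discussed in Remark \ref{rem_counter_p}.
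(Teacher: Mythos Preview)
Your proof is correct and follows essentially the same approach as the paper: Poisson summation expresses $k_\alpha^{per}$ as a sum of nonnegative terms $p(t,x+n)$, and then one discards all but a few terms and invokes the lower bound in \eqref{to1a}. The only cosmetic difference is that you keep the $n=0$ term (using $|x|\le \tfrac{\sqrt d}{2}$), whereas the paper keeps the terms with $|n|=3$; either choice yields the desired $c_3 t$ lower bound.
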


\begin{proof}[Proof of Lemma \ref{lem_lower}]
 By Lemma \ref{lem_poi} and \eqref{to1a}, we have for $|x|\le 1$, $0<t\le 1$,
 \begin{align*}
  k_{\alpha}^{per}(t,x) & = \sum_{n \in \mathbb Z^d} k_{\alpha}(t,x+n) \notag \\
  & \ge \sum_{n \in \mathbb Z^d, \, |n|=3} k_{\alpha}(t,x+n) \notag \\
  & \ge c_3 t,
 \end{align*}
where $c_3$ depends only on $(d,\alpha)$.
\end{proof}

We now complete the
\begin{proof} [Proof of Theorem \ref{thm0_period}]
 We only need to show \eqref{eNov30_1} since \eqref{eNov30_2} follows the same argument as in \eqref{erem_00a}.
 Now assume $0<\alpha<2$. By the convolution property we only need to prove the case $0<t\le 1$.
 Since $\hat f(0)=0$ we may freely adjust the $\text{zero}^{th}$ Fourier coefficient of the heat kernel.
Doing so gives us
 \begin{align*}
  e^{-t|\nabla|^{\alpha}} f = \tilde k(t,\cdot)*f,
 \end{align*}
where
\begin{align*}
 \tilde k(t,x) = k_{\alpha}^{per}(t,x) -c_3 t,
\end{align*}
By Lemma \ref{lem_lower}, we have
\begin{align*}
 \tilde k(t,x) \ge 0, \quad \forall\, 0<t\le 1, \, x \in \mathbb T^d.
 \end{align*}
 Hence
 \begin{align*}
 \| \tilde k(t,\cdot) \|_1 = 1-c_3 t \le e^{-c_1 t}, \quad \forall\, 0<t \le 1.
\end{align*}
Obviously \eqref{eNov30_1} follows from the above bound and Young's inequality.
\end{proof}

\end{document}